\newcommand{\R}{\mathbb{R}} 
\newcommand{\C}{\mathbb{C}} 
\newcommand{\Z}{\mathbb{Z}} 
\newcommand{\Q}{\mathbb{Q}}
\newcommand{\fa}{\mathfrak{a}}
\def\PSL{\ensuremath {\mathrm{PSL}}}
\def\SL{\ensuremath {\mathrm{SL}}}
\newcommand{\tab}{\hspace*{15pt}}
\newcommand{\xqedhere}[2]{%
  \rlap{\hbox to#1{\hfil\llap{\ensuremath{#2}}}}}
\newtheorem{thm}{Theorem}
\newtheorem{lem}[thm]{Lemma}
\newtheorem{prop}[thm]{Proposition}
\theoremstyle{remark}
\newtheorem{rek}[thm]{Remark}
\numberwithin{equation}{section}
\title[The equidistribution of Elliptic Dedekind sums]{The equidistribution of Elliptic Dedekind sums and generalized Selberg-Kloosterman sums}
\author{Kim Klinger-Logan}
\address{Kansas State University, 1228 N Martin Luther King Jr Dr, 138 Cardwell Hall, Manhattan, KS 66506, USA}
\email{kklingerlogan@ksu.edu}
\author{Tian An Wong}
\address{University of Michigan-Dearborn, 4901 Evergreen Rd, 2002 CASL Building, Dearborn, MI 48128, USA}
\email{tiananw@umich.edu}
\date{\today} 
\subjclass[2010]{11F20  (primary), 11M36 (secondary). }
\keywords{Equidistribution, Elliptic Dedekind sum, Bianchi groups, Selberg-Kloosterman sums}
\begin{document}
\begin{abstract}
We show that the values of elliptic Dedekind sums, after normalization, are equidistributed mod 1. The key ingredient is a non-trivial bound on generalized Selberg-Kloosterman sums for discrete subgroups of $\PSL_2(\mathbb C)$ using Poincar\'e series. 
\end{abstract}
\maketitle

\section{Introduction}


\subsection{The distribution of elliptic Dedekind sums}

Let $c,d$ be relatively prime integers. The classical Dedekind sum is given by
\[
s(c,d) = \sum_{n=1}^c((n/c))((nd/c))
\]
where $((x)) := \left\{ \begin{array}{ll} \{x\}-\frac12 &\text{for }x\in\mathbb{R}-\mathbb{Z}, \\0 & \text{for }x\in\mathbb{Z}\end{array}\right.$ and $\{x\}$ is the fractional part of $x$. Following conjectures of Grosswald and Rademacher \cite{RG}, Hickerson showed that the image of $s(c,d)$ is dense in $\mathbb{R}$ \cite{H}, and Vardi later showed that classical Dedekind sums are furthermore equidistributed mod 1 \cite{V}. Viewing $s(c,d)$ as a function on $\SL_2(\Z)$, a generalization of this result to non-cocompact lattices in $\SL_2(\R)$ was also recently proven by Burrin \cite{B}. 

Elliptic Dedekind sums are analogues of classical Dedekind sums for imaginary quadratic fields. The goal of this paper is to study the value distribution of elliptic Dedekind sums.  To further describe our results and motivation, we introduce some basic notation. Let $L=\omega_1\Z+\omega_2\Z$ be a lattice in $\mathbb{C}$ with $\text{Im}(\omega_1/\omega_2)>0$ and let $\mathcal{O}_L:=\{m\in\C~|~mL\subset L\}$ be its ring of multipliers.
For $x\in\mathbb{C}$ and $k\in\mathbb{Z}_{\ge 0}$, define the Eisenstein-Kronecker series
\[
E_k(x):=\sum_{\substack{w\in L\\w+x\neq 0}} (w+x)^{-k}|w+x|^{-s}\Big|_{s=0},
\]
where the value at $s=0$ is understood in the sense of analytic continuation. 
Using these, Sczech \cite{Sczech} defines the elliptic Dedekind sum, for $c,d\in\mathcal{O}_L$ and $c\neq 0$,
\[
{D}(c,d) := \frac{1}{c}\sum_{r\in L/cL} E_1\left(\frac{rd}{c} \right) E_1\left(\frac{r}{c} \right),
\]
and an additive  homomorphism $\Phi: \SL_2(\mathcal{O}_L)\to \C$ given by
\begin{equation}
\label{eq:phi}
\Phi\left(\begin{matrix}a & b \\ c & d\end{matrix}\right):= \left\{ \begin{array}{lc}
     E_2(0)I\left(\frac{a+d}{c}\right)- {D}(c,d)& c\neq 0 \\
    E_2(0)I\left(\frac{b}{d}\right) & c=0 
    \end{array}\right. 
\end{equation}
where $I(z):=z-\bar z = 2i\text{Im}(z)$ and $i = \sqrt{-1}$. 
Note that $E_2(0)$ vanishes if only if $\mathcal{O}_L$ is equal to $\mathbb{Z}$, $\mathbb{Z}[i]$, or $\mathbb{Z}[\rho]$ where $\rho=(-1+\sqrt{-3})/2$.
  
Let $K$ be an imaginary quadratic number field with discriminant $d_K$. We take $L$ to be the ring of integers $\mathcal O_K$, so $\mathcal O_L = \mathcal O_K$. Define also the normalizations, for $c\neq 0$,
\[
\widetilde D(c,d)=D(c,d)/(2E_2(0)\sqrt{-1})\text{ and }\widetilde\Phi=\Phi/(2E_2(0)\sqrt{-1}).
\]
In this paper weassume that $K$ is a fixed imaginary quadratic field different from $\mathbb{Q}(\sqrt{-1})$ and $\mathbb{Q}(\sqrt{-3})$ (in which case $D$ is identically zero). In \cite{REU}, Berkopec, Branch, Heikkinen, Nunn, and Wong showed that the values of $\widetilde{D}$ are real and dense in $\mathbb{R}$, extending work of Ito \cite{Ito} from Euclidean $K$ to the general case. Following Vardi's work in the classical case, it is then natural to ask how the values of $\widetilde{D}$ are distributed. 

In this paper, we show that the values of the normalized elliptic Dedekind sum are equidistributed mod $1$. 

\begin{thm}
\label{main}
For all positive $r\in\mathbb R$, 
 the sequence 
\[
\{r\widetilde{D}(c,d)\,:\, c,d\in \mathcal{O}_K, 0<|d|<|c|, (c,d)=1\} 
\]
is equidistributed on $[0,1)$ as $|c|\to\infty$.
\end{thm}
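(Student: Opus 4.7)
Our strategy parallels Vardi's proof in the classical case, carried out on the Bianchi group $\Gamma := \PSL_2(\mathcal{O}_K)$ acting on hyperbolic $3$-space $\mathbb{H}^3$. By Weyl's equidistribution criterion, it suffices to prove that for every nonzero $m\in\mathbb Z$,
\[
S_m(N) := \sum_{\substack{(c,d)=1\\ 0<|d|<|c|\le N}} e^{2\pi i m r\widetilde D(c,d)} = o(N^4)
\]
as $N\to\infty$, since the number of relevant coprime pairs has order $N^4$.

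The first step is to re-express the Weyl sums in group-theoretic terms. For a coprime pair $(c,d)$ with $c\neq 0$, fix $a,b\in\mathcal{O}_K$ with $ad-bc=1$, and let $\gamma\in\Gamma$ be the resulting matrix. From \eqref{eq:phi} and the definition of $\widetilde D$,
\[
m r\widetilde D(c,d) = \tfrac{m r}{2\sqrt{-1}}\,I\!\bigl((a+d)/c\bigr) - m r\widetilde\Phi(\gamma),
\]
so $e^{2\pi i m r\widetilde D(c,d)}$ factors as a phase depending only on $(a+d)/c$ times the unitary character $\chi_m(\gamma) := e^{-2\pi i m r\widetilde\Phi(\gamma)}$ on $\Gamma$. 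Parametrizing the pairs $(c,d)$ by representatives of the double cosets $\Gamma_\infty\backslash\Gamma/\Gamma_\infty$, where $\Gamma_\infty$ stabilizes the cusp at infinity, I would then interpret $S_m(N)$ as a sum over $c$ of generalized Selberg--Kloosterman sums twisted by $\chi_m$.

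The heart of the argument, and its main obstacle, is to prove a nontrivial (power-saving) bound on these twisted Kloosterman sums. The plan is to form a Poincar\'e series on $\mathbb H^3$ attached to $\chi_m$ and to suitable archimedean test data, and then compute its self--inner product in two ways. The geometric expansion, via the Bruhat decomposition, unfolds into a weighted sum of twisted Selberg--Kloosterman sums against $K$-Bessel transforms, while the spectral expansion is bounded by Parseval's identity applied to the $L^2$-spectrum of $\Gamma\backslash\mathbb H^3$. Matching the two expansions delivers the sought Kloosterman bound.

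Finally, I would smooth both the cutoff in $|c|$ and the constraint $0<|d|<|c|$, insert the Kloosterman bound, and sum over $c$ with $|c|\le N$. Because the bound beats the trivial one, this yields $S_m(N) = o(N^4)$ for every nonzero $m$, and Weyl's criterion closes the argument. The hardest technical step is verifying the multiplier--system properties of $\chi_m$ on the Bianchi group, so that the Poincar\'e series construction is legitimate, and then controlling the archimedean Bessel transforms sharply enough to extract a genuine power saving; the spectral theory on $\PSL_2(\mathbb C)$ is more delicate than on $\PSL_2(\mathbb R)$, and handling the non--trivial cusp structure when $K$ has class number greater than one will also require care.
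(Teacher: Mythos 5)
Your proposal follows essentially the same route as the paper: the Weyl criterion, Sczech's homomorphism $\Phi$ to factor $e(mr\widetilde{D}(c,d))$ into an archimedean phase times the unitary character $e(-mr\widetilde\Phi)$ so that the Weyl sums become generalized Selberg--Kloosterman sums over $\Gamma_\infty\backslash\Gamma/\Gamma_\infty$, and a Goldfeld--Sarnak-style Poincar\'e series argument on $\mathbb{H}^3$ (inner product of twisted Poincar\'e series against an $L^2$/resolvent bound, then Phragm\'en--Lindel\"of, Perron's formula and contour shifting) to extract a power-saving bound on the resulting sums over $c$. The one ingredient you assert rather than prove---that the number of admissible pairs with $|c|\le N$ has exact order $N^4$---is established in the paper by the same Perron-type argument applied to the Dirichlet series $\phi_0(s)$ from the constant term of the Eisenstein series, so your outline matches the paper's proof in all essentials.
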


\noindent  To prove Theorem \ref{main}, by the Weyl criterion for equidistribution, it suffices to obtain a nontrivial bound on the following sum for any positive integer $n$ and real $X>0$,
\[
\sum_{0 < |c| < X }\sum_{\substack{d \bmod c\\ (c,d)=1}}e(rn\widetilde{D}(c,d)), 
\]
where  $r\in\mathbb{R}$ and $e(x) = e^{2\pi i x}$ (the trivial bound being $O(X^4)$). If we let $\Gamma = \PSL_2(\mathcal O_K)$ and $\Gamma_\infty$ be the stabilizer of the cusp at infinity, then the indexing set corresponds to a complete set of representatives of the quotient $\Gamma_\infty\backslash \Gamma$.  Using the homomorphism \eqref{eq:phi}, the inner sum transforms into a generalized Selberg-Kloosterman sum associated to $\Gamma$ and $\chi$ at the cusp $\infty$,
\begin{align}\label{def:S1}
\sum_{\substack{d \bmod c\\ (c,d)=1}} e(r \widetilde D(c,d))  =S_\infty(\lfloor -r\rfloor,\lfloor-r\rfloor,c,{\chi_\alpha})
\end{align}
for $c\neq 0$, where we set $\alpha = -r + \lfloor-r\rfloor$ with $\lfloor x\rfloor $ the greatest integer less than or equal to $x$ and 
\begin{equation}
  \label{chi}
{\chi_\alpha(M)}=e(\alpha\widetilde \Phi(M)),\qquad \alpha\in\mathbb{R}.
\end{equation}
A more general definition is given in (\ref{def:kloos}). Note that $\widetilde\Phi$ is integral by \cite[Satz 4]{Sczech} and moreover real-valued. The function $S_\infty$ in defined in \eqref{def:kloos} and  can also be viewed as a Kloosterman sum twisted by a  unitary character related to the `phase' of the elliptic Dedekind sum.

 In the classical case, the required bounds for Selberg-Kloosterman sums were proved by Goldfeld and Sarnak \cite{GS}. The equidistribution of classical Dedekind sums was proved by Vardi using a similar relation between Dedekind sums and Selberg-Kloosterman sums on $\SL_2(\Z)$ \cite[Theorem 1.4]{V}, which take the form
 \[
 e\left(\frac{r}{4}\right)\sum_{\substack{0 < d < c\\(c,d)=1}}e(rs(c,d)) = S\left(\Big\lfloor\frac{-r}{12}\Big\rfloor,\Big\lfloor\frac{-r}{12}\Big\rfloor,c,\chi_r,\mathrm{SL}_2(\Z)\right),
 \]
 for $c\neq 0$ and where $S$ is defined as on \cite[p.191]{V}.
 We will use a similar strategy to prove Theorem \ref{main} in a more general setting.

\subsection{Sums of generalized Selberg-Kloosterman sums}
Let $\Gamma$ be a cofinite and non-cocompact discrete subgroup of $\PSL_2(\mathbb C)$ with an associated cusp $\zeta\in \mathbb P^1(\C)$ and $A\in \PSL_2(\mathbb{C})$ so that $A\zeta=\infty$. 
Let $\Gamma_\zeta$ be the stabilizer of $\zeta$ in $\Gamma$, and $\Gamma'_\zeta$ its maximal unipotent subgroup. Let $\mathcal R$ be a complete set of representatives of double cosets in the quotient
 \begin{equation}
 \label{Rset}
\gamma = \begin{pmatrix} * &* \\ c & d\end{pmatrix} \in 
A \Gamma'_\zeta A^{-1}\backslash  A\Gamma A^{-1}/ A \Gamma'_\zeta A^{-1}, \quad c\neq 0,
\end{equation}
which we also write as $(c,d)$ for short. Given $\Gamma'_\zeta$, there is an associated complex lattice $\Lambda$ described in \eqref{eq:Lambdadef}, with dual lattice $\Lambda'$. Note that $\Lambda'$ is non-standard and is defined with respect to the inner product in \eqref{eq:innerproduct}. For this reason, the Selberg-Kloosterman sums we define are also non-standard.

Define a bilinear form on $\C^2$ as 
\begin{equation}\label{eq:innerproduct}
\langle u,v \rangle  = \frac{u\bar v - \bar u v}{2i} = \text{Im}(u\bar v).
\end{equation}
 We note that this choice of bilinear form generally differs from convention, such as in \cite{EGM}; this difference is due to the fact that \eqref{eq:phi} involves the function $I(z)$. 
For any fixed $\alpha\in\C$, let $\chi_\alpha:\Gamma\to S^1$ be a homomorphism such that 
 for each lattice $\Lambda$ associated to a cusp $\zeta$ of $\Gamma$,
\begin{equation}
\label{eq:chiprop}
\chi_\alpha:\begin{bmatrix} 1 & \omega \\ 0 & 1 \end{bmatrix} \mapsto e\left(-\text{Im}(\alpha\bar\omega)\right) = e(-\langle \alpha,\omega\rangle), \qquad \omega\in\Lambda.
\end{equation}
 This includes the special case of $\Gamma = \PSL_2(\mathcal O_K)$ and $\alpha\in \mathbb R$ with $\chi_\alpha$ as in \eqref{chi}, which follows from the fact that
for any $a\in\mathcal O_K$, we have $\widetilde\Phi(\begin{smallmatrix}1 & a \\ 0 & 1\end{smallmatrix}) =  \text{Im}(a)$ and also $\overline{\Phi(M)}=-\Phi(M)$ by \cite{Sczech}.

For any $m,n\in\Lambda',c\in\Lambda$, and $\alpha\in\mathbb C$, we define the generalized Selberg-Kloosterman sum on $\Gamma$ associated to $\zeta$ and $\chi_\alpha$ to be
\begin{equation}\label{def:kloos}
S_{\zeta}(m,n,c,\chi_\alpha):= \sum_{(c,d) \in \mathcal R}
\overline{\chi_\alpha (\gamma)}\, \widetilde e\left(\frac{\overline{(m-\alpha)}a+\overline{(n-\alpha)}d}{c}\right),
\end{equation}
where $\widetilde e(z) = e^{2\pi i \text{Im}(z)}$. 
Note that in the original Selberg-Kloosterman sums over $\Q$, the character $\chi$ is obtained from the multiplier system associated to a given factor of automorphy. For this reason, the estimate given in Theorem \ref{thm:GSthm2}  is of independent interest.

The proof of Theorem \ref{main} follows from bounds on both the generalized Selberg-Kloosterman sums and the Kloosterman zeta function given by 
 \begin{align}
 \label{zeta}
 Z_\zeta(m,n,s, \chi_\alpha ) := \sum_{c\neq0}\frac{S_\zeta(m,n,c, \chi_\alpha)}{|c|^{2s}}.
 \end{align} 
The main ingredients in the proof of Theorem \ref{main} are the associated Eisenstein and Poincar\'e series.  These Poincar\'e series give bounds for our generalized Selberg-Kloosterman sums and hence generalize a classical result of Goldfeld and Sarnak  \cite{GS} to the imaginary quadratic case. For simplicity, we state the theorem for the case $\zeta =\infty$.

\begin{thm}
\label{thm:GSthm2} 
Let $m,n\in\Lambda'$, $c\in \Lambda$ and $\alpha\in\mathbb{C}$. Let $\chi_\alpha$  be a unitary character of $\Gamma$ satisfying \eqref{eq:chiprop}. Then for any  $\epsilon>0$, we have
 $$
 \sum_{|c|\leq x}\frac{S_\infty(m,n,c,\chi_\alpha)}{|c|^2}=\sum_{j=1}^\ell \tau_j x^{\beta_j}+O(x^{1+\epsilon}),
 $$
 for certain constants $\tau_j$ and real $ 0<\beta_j<2$. The implied constant depends only on $\Gamma, \chi_\alpha, k, m $ and $n$.
\end{thm}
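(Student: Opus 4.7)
The plan is to adapt the spectral method of Goldfeld--Sarnak \cite{GS} to the three-dimensional hyperbolic setting of the Bianchi quotient $\Gamma\backslash \mathbb{H}^3$, twisted by the unitary character $\chi_\alpha$. For $P=(z,v)\in \mathbb{H}^3 = \C\times\R_{>0}$, for $m\in\Lambda'$, and for $\mathrm{Re}(s)$ large, I would introduce the twisted Poincar\'e series
\[
U_m(P,s,\chi_\alpha) := \sum_{\gamma\in \Gamma'_\infty\backslash \Gamma}
\overline{\chi_\alpha(\gamma)}\,\widetilde e\bigl((m-\alpha)\,z(\gamma P)\bigr)\, v(\gamma P)^s.
\]
By \eqref{eq:chiprop} the summand is invariant under the left action of $\Gamma'_\infty$ on $\gamma$, so $U_m$ is well defined; absolute convergence for $\mathrm{Re}(s)>1$ follows by majorization with the usual Eisenstein series attached to $\Gamma$ at $\infty$.

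The first step is to compute the Fourier expansion of $U_m$ along $\Lambda$ at the cusp $\infty$. Separating the $c=0$ and $c\neq 0$ contributions via the Bruhat decomposition, the $n$-th Fourier coefficient (for $n\in\Lambda'$) takes the shape $\mathcal{W}_{m,n,\alpha}(v,s)\cdot Z_\infty(m,n,s,\chi_\alpha)$, where $\mathcal{W}_{m,n,\alpha}$ is an explicit Bessel-type transform in $v$ that is meromorphic and non-vanishing in $s$ on the half-plane of interest. This exhibits $Z_\infty(m,n,s,\chi_\alpha)$ as the quotient of a Fourier coefficient of $U_m$ by $\mathcal{W}_{m,n,\alpha}$, so analytic information about $U_m$ transfers directly to $Z_\infty$.

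The main analytic ingredient is the meromorphic continuation of $U_m$ in $s$ together with polynomial growth on vertical lines. For this I would invoke the Selberg spectral decomposition of $L^2(\Gamma\backslash \mathbb{H}^3,\chi_\alpha)$: the character $\chi_\alpha$ defines a unitary system of multipliers whose restriction to $\Gamma'_\infty$ matches the additive character built into $U_m$, so $U_m$ belongs to the appropriate Hilbert space. Spectral expansion of $U_m$ against the discrete spectrum (including residual forms) and the continuous spectrum spanned by twisted Eisenstein series attached to each cusp of $\Gamma$ yields meromorphic continuation of $U_m$ to $\mathrm{Re}(s)>1/2$, with at most finitely many simple poles at spectral parameters $s_j\in (1,2]$ corresponding to exceptional eigenvalues $\lambda_j = s_j(2-s_j)\in [0,1)$ of the twisted Laplacian, together with polynomial bounds on vertical lines obtained from standard estimates on Eisenstein series and their inner products with $U_m$.

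The conclusion follows by a Perron-type contour shift. For sufficiently large $\sigma_0>0$,
\[
\sum_{|c|\le x}\frac{S_\infty(m,n,c,\chi_\alpha)}{|c|^2}
= \frac{1}{2\pi i}\int_{(\sigma_0)}Z_\infty(m,n,s+1,\chi_\alpha)\,\frac{x^{2s}}{s}\,ds + O(1),
\]
and moving the contour leftward to $\mathrm{Re}(s) = -1/2+\epsilon$, the residues at $s=s_j-1$ produce the main terms $\tau_j x^{\beta_j}$ with $\beta_j = 2s_j-2\in(0,2)$, while the shifted integral is $O(x^{1+\epsilon})$ thanks to the vertical-line bound. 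The main obstacle I foresee is the twisted spectral theory: because $\chi_\alpha$ is not in general a multiplier system arising from a factor of automorphy, one must verify that the Selberg decomposition, the Maass--Selberg relations, and the meromorphic continuation of the twisted Eisenstein series all carry over to this setting and that the exceptional discrete spectrum is indeed finite; a secondary difficulty is confirming non-vanishing of the Bessel factor $\mathcal{W}_{m,n,\alpha}(v,s)$ in the strip needed to isolate $Z_\infty$.
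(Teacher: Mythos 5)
Your proposal follows the same broad Goldfeld--Sarnak template as the paper (a $\chi_\alpha$-twisted Poincar\'e series, extraction of $Z_\infty$, then Perron), but the step that carries all of the difficulty is precisely the one you defer. The meromorphic continuation and vertical-line growth of $U_m$ (hence of $Z_\infty$) cannot be quoted from ``the Selberg spectral decomposition of $L^2(\Gamma\backslash\mathbb{H}^3,\chi_\alpha)$'': as the paper stresses, $\chi_\alpha$ is not a multiplier system attached to a factor of automorphy, and by \eqref{eq:chiprop} it is in general nontrivial on the unipotent stabilizer $\Gamma'_\infty$ (unless $\alpha\in\Lambda'$), so singular versus nonsingular cusps, the twisted Eisenstein series, their continuation and the Maass--Selberg relations would all have to be established from scratch; listing this as an ``obstacle'' leaves the core of the argument unproven. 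There are also concrete defects in the steps you do write down: in $\mathbb{H}^3$ the majorizing Eisenstein series converges only for $\mathrm{Re}(s)>2$, not $\mathrm{Re}(s)>1$; your $U_m$ omits the exponential damping factor $e^{-2\pi|m-\alpha|\,v(\gamma P)}$ (present in \eqref{eq:emdefgeneral} through $e(i|m-\alpha|r_{AM})$), so the $c=0$ cosets already contribute $v^{s}$ times a unimodular factor and $U_m\notin L^2$ near the cusp, contradicting your claim that it lies in the Hilbert space; with the purely oscillatory phase $\widetilde e\bigl((m-\alpha)z(\gamma P)\bigr)$ the asserted left $\Gamma'_\infty$-invariance also fails, since translation by $\omega\in\Lambda$ produces $e\bigl(\mathrm{Im}((m-\alpha)\omega)\bigr)$, which is not controlled by duality (the pairing defining $\Lambda'$ is $\mathrm{Im}(u\bar v)$, not $\mathrm{Im}(uv)$) and does not cancel the factor coming from \eqref{eq:chiprop}; and finally, shifting Perron's contour to $\mathrm{Re}(s)=-1/2+\epsilon$ presupposes continuation of $Z_\infty$ up to $\mathrm{Re}=1/2$ with explicit polynomial exponents there, far more than the statement needs, and the error $O(x^{1+\epsilon})$ does not follow from an unquantified ``polynomial bound'' without a truncated Perron formula and a balancing choice of the truncation parameter.

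For comparison, the paper avoids twisted spectral theory entirely. The damped Poincar\'e series satisfies the resolvent identity \eqref{eq:resID}, and the trivial bound $\|\mathscr{R}_\lambda\|\le|\mathrm{Im}(\lambda)|^{-1}$ already yields the $L^2$ estimate of Lemma \ref{lem1} on $1<\sigma\le 2$; the zeta function $Z_\infty$ is reached not through a Fourier coefficient but through the unwound inner product of $U_m(\cdot,s,\chi_\alpha)$ against $U_n(\cdot,\bar s+2,\chi_\alpha)$ (Lemma \ref{lem2}), which equals an explicit ratio of Gamma factors times $Z_\infty(m,n,s,\chi_\alpha)$ plus a remainder analytic in $\mathrm{Re}(s)>1$; Cauchy--Schwarz and Stirling then give the growth bound of Theorem \ref{thm:GSthm1}, namely $O\bigl(|n-\alpha|^2|m-\alpha|^2|s|/(\sigma-1)^2\bigr)$ for $\sigma>1$. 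Phragm\'en--Lindel\"of between $\sigma=1+\epsilon$ and $\sigma=2+\epsilon$, a truncated Perron integral on $\mathrm{Re}(s)=2+\epsilon$, a shift only to $\mathrm{Re}(s)=1$, and the choice $T=x$ produce the main terms $\tau_j x^{\beta_j}$ (from the finitely many resolvent poles $s_j\in(1,2)$, $\beta_j=2(s_j-1)$) and the error $O(x^{1+\epsilon})$. To repair your route you would need to restore the damping factor, fix the phase convention, and genuinely develop the $\chi_\alpha$-twisted continuous spectrum; the more economical fix is to replace the spectral expansion by the resolvent argument just described.
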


\noindent Theorem \ref{thm:GSthm2} can be adapted to hold for other $\Gamma$ and $\zeta$ by applying the argument in Section \ref{sec:bounds} to the Poincar\'e series defined in \eqref{eq:emdefgeneral}.

 In Section \ref{proofothm1}, together with a routine application of the analytic theory of Eisenstein series on $\Gamma$, we deduce the main result by specializing to $\Gamma = \PSL_2(\mathcal O_K)$ and $K\neq \mathbb{Q}(\sqrt{-1}), \mathbb{Q}(\sqrt{-3})$. 
The general nature of these results draws, in part, on the work of Elstrodt, Grunewald, and Mennicke \cite{EGM}. In particular, similar to the work of Burrin \cite{B} in the classical case, our methods should apply to future generalizations of elliptic Dedekind sums to more general subgroups of $\PSL_2(\C)$.



\section{Definitions}\label{background}
We first recall some basic definitions and fix the notation that we require, referring to \cite{EGM} and \cite{Sarnak} for further details.

\subsection{Upper Half-Space}
The upper half-space in Euclidean 3-space is a model for 3-dimensional hyperbolic space
$$
\mathbb{H}^3=\{ (z,r)~|~ z\in \mathbb{C}, r>0\}= \{ (x,y,r)~|~ x,y\in \mathbb{R}, r>0\}.
$$
We can view $\mathbb{H}^3$ as a subset of Hamilton's quaternions, and a point $P\in \mathbb{H}^3$ is written 
\[
P=(z,r)= (x,y,r)= z+rj,
\]
where $z=x+iy$ and $j=(0,0,1)$.
 The action of $\SL_2(\mathbb{C})$ on $P\in \mathbb{H}^3$ is given by the isometry
 $$
 P\mapsto MP:= M(P) = (aP+b)(cP+d)^{-1},
 $$
 where the inverse is taken in the skew field of the quaternions. More explicitly, we write
$M(z+rj)=z_M+r_Mj$ where 
$$
z_M =\frac{(az+b)(\bar c\bar z+\bar d) +a\bar c r^2}{|cz+d|^2+|c|^2r^2}\ \ \ \text{ and } \ \ \ r_M = \frac{r}{|cz+d|^2+|c|^2r^2}.
$$
We are interested in discrete subgroups $\Gamma$ of $\PSL_2(\mathbb C)$ and functions on the quotient $\Gamma\backslash \mathbb H^3$.  Define $D_\Gamma$ to be a fundamental domain of $\Gamma$. Define also the Laplace-Beltrami operator
\[
\Delta =  r^2\left(\frac{\partial^2}{\partial^2 x}+\frac{\partial^2}{\partial^2 y}+\frac{\partial^2}{\partial^2 r} \right) - r\frac{\partial}{\partial r},
\]
and the associated volume form $ dv = dx\,dy\,dr/r^3$ and length form \\$ds^2 = (dx^2+dy^2+dr^2)/r^2$.

We write $\Gamma_\infty$ for the stabilizer of the cusp at infinity in $\Gamma$. In the case that $\Gamma = \PSL_2(\mathcal O_K)$, we have that
\[
\Gamma_\infty = \left\{\begin{pmatrix}1 & a \\ 0 & 1\end{pmatrix} : a \in\mathcal O_K\right\},
\]
and a coset representation of $\Gamma_\infty\backslash \Gamma$ is given by
\[
\left\{\begin{pmatrix}* & * \\ c & d\end{pmatrix}\in\Gamma : (c,d)=1\right\}.
\]
More generally, given a cusp $\zeta$ of a general $\Gamma$ such that $A\zeta=\infty$, the set
\begin{equation}\label{eq:Lambdadef}
\Lambda := \left\{\omega\in \mathbb{C}: 
\begin{pmatrix}
     1 & \omega\\
     0 & 1
\end{pmatrix} 
\in A\Gamma A^{-1}\right\}
\end{equation}
forms a lattice in $\C$. We shall denote by $\Gamma_\zeta$ the stabilizer of $\zeta$ in $\Gamma$ and $\Gamma'_\zeta$ the maximal unipotent subgroup of $\Gamma_\zeta$. 
We have then that 
\[
A\Gamma'_\zeta A^{-1}= \left\{
\begin{pmatrix}
     1 & \omega\\
     0 & 1
\end{pmatrix} 
: \omega\in\Lambda\right\}.
\]
Denote by $|\Lambda|$ the area of a fundamental parallelogram associated to the lattice $\Lambda$, and $\mathrm{vol}(\Gamma)$ the covolume of $\Gamma$ obtained by integrating $ds$ over a fundamental domain of $\Gamma$. For example, if $\Gamma = \PSL_2(\mathcal{O}_K),$ a well-known result of Humbert shows that 
\[
\mathrm{vol}(\Gamma) = {|d_K|^{3/2}}\zeta_K(2)/(4\pi^2),
\]
where $d_K$ is the discriminant of $K$ and $\zeta_K(s)$ is the Dedekind zeta function of $K$ \cite[Chapter 7, Theorem 1.1]{EGM}.

\subsection{Eisenstein series}
\label{eis}
The Eisenstein series on $\Gamma$ associated to the cusp $\zeta$ is defined by
\[
E_{A}(P,s):=\sum_{M\in\Gamma'_{\zeta}\setminus\Gamma} (r_{AM})^{s},
\]
which converges absolutely for Re$(s)>2$. The map 
$
z+rj\mapsto r^{s}
$
satisfies the differential equation $\Delta r^{s} = s(s-2)r^{s},$ so that $E_{A}(P,s)$  is a $\Gamma$-invariant eigenfunction of $\Delta$ with
\[
\Delta E_A(P,s) = s(s-2)E_A(P,s),
\]
but is not square-integrable on $\Gamma\backslash\mathbb H^3$. By Theorem 6.1.2 of \cite{EGM}, $E_{A}(P,s)$ has meromorphic continuation to the whole $s$-plane. Furthermore, Theorem 6.1.11 of \cite{EGM} shows $E_{A}(P,s)$ is holomorphic for $\mathrm{Re}(s)>0$, except for a finite number of simple poles on the real line $(0,2]$, and the simple pole of $E_{A}(P,s)$ at $s=2$ has residue equal to $|\Lambda|/\mathrm{vol}(\Gamma)$.  Note also that the series
\[
E_{A}^*(P,s)=\sum_{M\in(A\Gamma'_{\zeta}A^{-1})\setminus A\Gamma A^{-1}} (r_{M})^{s},
\]
agrees with $E_A(P,s)$ term by term by the change of variable $M \mapsto AMA^{-1}$ whenever the series converges absolutely.

The analytic properties of $E_A(P,s)$ are controlled by the behaviour of the zeroth coefficient of its Fourier expansion at the cusp. The Fourier expansion is given by
\[
\sum_{\omega'\in\Lambda'}a_{\omega'}(r,s)q^{\omega'},
\]
where $q^{\omega'} =e(\omega' \cdot z),$ with $e(x) = e^{2\pi i x}$ and the operation $\cdot $ denotes the Euclidean scalar product in $\C$, and $\Lambda'$ is the dual lattice of $\Lambda$ with respect to the latter inner product. The Fourier coefficients can be expressed as
\begin{equation}
\label{a0}
    a_{0}(r,s) = [\Gamma_{\zeta}:\Gamma_{\zeta}'] |d_0|^{-2s} r^{s} + \frac{\pi}{(s-1)|\Lambda|}\phi_0(s)r^{2-s}
\end{equation}
and
\[
    a_{\omega'}(r,s)= \frac{2\pi^{s}}{|\Lambda|\Gamma(s)} |\omega'|^{s-1} \phi_{\omega'}(s)r K_{s-1}(2\pi|\omega'|r), \qquad \omega'\neq0,
\]
where $d_0$ is a constant given in \cite[Theorem 2.1]{EGM}, $K_s$ denotes the modified Bessel function of the second kind, and $\phi_{\omega'}(s)$ is the Dirichlet series
\[
\phi_{\omega'}(s) = \sum_{(c,d)\in \mathcal R}\frac{e(\omega'\cdot\frac{d}{c})}{|c|^{2s}},
\]
with the summation running over $\mathcal R$ as defined in \eqref{Rset}. 

\begin{rek}
For example, if $\Gamma = \PSL_2(\mathbb{Z}[i])$, then $\phi_{0}(s) = \zeta_K(s-1)/\zeta_K(s)$ and for $\Gamma = \PSL_2(\mathcal O_K)$, it can be written as
\[
\sum^h_{j=1}\frac{L_j(s-1)}{L_j(s)},
\]
where $h$ is the class number of $K$ and
$
L_j(s)=\sum_{\mathfrak a }\psi_j(\mathfrak a)N(\mathfrak a)^{-s},
$
the sum running over nonzero ideals of $\mathcal O_K$ and $\psi_j$ an ideal class character \cite[p.210]{Sarnak}. For general $\Gamma$, the functional equation for $\phi_0$ follows from the functional equation of the associated Eisenstein series (see \cite[p.\,232-233]{EGM}).
\end{rek}

\subsection{Poincar\'e series}\label{sec:PS} 
For $s\in \mathbb{C}$ and $0\neq m\in  \Lambda'$, the dual lattice of $\Lambda$, we define the Poincar\'{e} series on $\Gamma$ associated to $\chi_\alpha$ as defined in \ref{eq:chiprop} and $\zeta$ as
\begin{align}\label{eq:emdefgeneral}
U_{A,m}(P,s, \chi_\alpha,\Gamma,\zeta) := \sum_{M\in \Gamma '_\zeta\backslash \Gamma }\overline{\chi_\alpha(AM)}\,r_{AM}^s e(i|m-\alpha|r_{AM}-\langle m - \alpha,z_{AM}\rangle)
\end{align}
where the inner product is as defined in \eqref{eq:innerproduct}.
The series converges absolutely for $s\in\mathbb{C}$ and $\text{Re}(s)>2$ following the same argument as \cite[\S3]{Sarnak}. 
For the purpose of clarity, we will set $\zeta=\infty$ in the future proofs and calculations 
 and we more simply define
 \begin{align}
U_m(P,s, \chi_\alpha) := \sum_{M\in \Gamma '_\infty\backslash \Gamma }\overline{\chi_\alpha(M)}\,r_M^s e(i|m-\alpha|r_M-\langle m - \alpha,z_M\rangle).
\end{align}
These computations can be applied to $U_{A,m}(P,s, \chi_\alpha,\Gamma,\zeta)$ with proper scaling. 

\begin{lem}
The function $U_m$ is $\Gamma_\infty'$-invariant, and
\begin{equation}\label{eq:autU}
U_m(MP,s, \chi_\alpha) =\chi_{\alpha}(M)\,U_m(P,s, \chi_\alpha).
\end{equation}
\end{lem}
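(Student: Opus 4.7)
The plan is to verify two claims in sequence: first, that each summand of the defining series is $\Gamma_\infty'$-invariant under left multiplication (so that $U_m$ descends to a function on $\Gamma_\infty'\backslash\Gamma$), and then the automorphy relation \eqref{eq:autU} for general $M\in\Gamma$.

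For the $\Gamma_\infty'$-invariance, I would pick $\gamma=\begin{pmatrix}1 & \omega \\ 0 & 1\end{pmatrix}\in\Gamma_\infty'$ with $\omega\in\Lambda$ and compare the summands indexed by $M$ and by $\gamma M$. The action of $\gamma$ on $P=z+rj$ is the Euclidean translation $z\mapsto z+\omega$, so $z_{\gamma M}=z_M+\omega$ and $r_{\gamma M}=r_M$; the homomorphism property of $\chi_\alpha$ together with \eqref{eq:chiprop} introduces an explicit correction $\overline{\chi_\alpha(\gamma)}$; and the exponential picks up the extra factor $e(-\langle m-\alpha,\omega\rangle)$ from expanding $-\langle m-\alpha,z_M+\omega\rangle$. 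The combination of these contributions collapses to $1$ via the dual lattice condition $\langle m,\omega\rangle\in\mathbb{Z}$ for $m\in\Lambda'$, $\omega\in\Lambda$, together with the sign normalization in \eqref{eq:chiprop}, which puts each summand on the quotient.

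For the automorphy relation \eqref{eq:autU}, I would substitute $P\mapsto MP$ in the definition of $U_m$ and invoke the cocycle identities $r_N(MP)=r_{NM}(P)$ and $z_N(MP)=z_{NM}(P)$, which follow from associativity of the $\SL_2(\mathbb{C})$-action on $\mathbb{H}^3$. Reindexing by $N'=NM$ is a bijection of coset representatives of $\Gamma_\infty'\backslash\Gamma$ (legitimate by the first step), and rewriting $\overline{\chi_\alpha(N'M^{-1})}=\overline{\chi_\alpha(N')}\,\chi_\alpha(M)$ by multiplicativity and unitarity of $\chi_\alpha$ pulls the factor $\chi_\alpha(M)$ out of the sum, yielding \eqref{eq:autU}. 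Structurally the argument parallels the treatment in \cite[\S3]{Sarnak}; the only delicate point is sign bookkeeping arising from the non-standard bilinear form \eqref{eq:innerproduct}, the minus sign in \eqref{eq:chiprop}, and the twist by $-\alpha$ in the frequency of the exponential, which together ensure that the contributions from a unipotent translation cancel exactly.
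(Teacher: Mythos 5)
Your proposal is correct and follows essentially the same route as the paper's proof: first checking that each summand is unchanged under $M\mapsto\gamma M$ for unipotent $\gamma=\left(\begin{smallmatrix}1&\omega\\0&1\end{smallmatrix}\right)$ (using $r_{\gamma M}=r_M$, $z_{\gamma M}=z_M+\omega$, the character value on $\gamma$, and $\langle m,\omega\rangle\in\mathbb{Z}$), and then deriving \eqref{eq:autU} by substituting $MP$, reindexing the cosets by right multiplication, and extracting $\chi_\alpha(M)$ via the homomorphism and unitarity of $\chi_\alpha$. The sign bookkeeping you flag is handled (and asserted to cancel) in exactly the same way in the paper, so no further comment is needed.
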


\begin{proof}
For any $M'\in \Gamma_\infty'$, we have that $M'$ acts trivially on $r_M$ and by translation on $z_M$. We then have, up to conjugaton, 
\begin{align*}
&\overline{\chi_\alpha(M'M)} r_{M'M}^s e(i|m-\alpha|r_{M'M}-\langle m - \alpha,z_{M'M} \rangle),\qquad M' = \begin{bmatrix} 1 & \omega \\ 0 & 1 \end{bmatrix}\\
&=e(\langle \alpha,\omega\rangle)\overline{\chi_\alpha(M)} r_M^s e(i|m-\alpha|r_{M}-\langle m - \alpha,z_{M}+ \omega \rangle)\\
&= \overline{\chi_\alpha(M)} r_M^s e(i|m-\alpha|r_{M}-\langle m-\alpha,z_{M} \rangle),
\end{align*}
where we note that $\langle m, \omega\rangle\in \Z$ so that $e(\langle m,\omega\rangle)=1$. 

Secondly, by direct computation we have
\begin{align*}\label{eq:autU}
U_m(M_1P,s, \chi_\alpha) 
  &=\overline{\chi_\alpha(M_1^{-1})} U_m(P,s, \chi_\alpha),
  \end{align*}
 and by the homomorphism property of $\chi_\alpha$, the identity \eqref{eq:autU} follows.
\end{proof}

Let $-\widetilde\Delta$ be the self-adjoint extension of the Laplace operator on $L^2(\Gamma\backslash\mathbb{H}^3)$. Let $\rho(-\widetilde\Delta)$ be the resolvent set of $-\widetilde\Delta$ and 
\[
\mathscr{R}_\lambda:= (-\widetilde\Delta - \lambda)^{-1},
\]
for $\lambda\in\rho(-\widetilde\Delta)$ the corresponding resolvent operator. Note that $\mathscr{R}_{s(2-s)}$ is holomorphic for $\text{Re}(s)>2$ and meromorphic for $\text{Re}(s)>1$ with at most finitely many poles of order $1$ in the interval $(1,2)$.

Similarly to \cite[p.355]{EGM} (see also \cite[(2.16)]{Sarnak}), we also have 
\begin{align}\label{eq:resID}U_m(P,s,\chi_\alpha)= 2\pi |m-\alpha|(1-2s)\mathscr{R}_{s(2-s)}U_m(P,s+1,\chi_\alpha)
\end{align}
since 
\begin{align*}\Delta U_m(&P,s,\chi_\alpha) +s(2-s)U_m(P,s,\chi_\alpha) =2\pi |m-\alpha|(1-2s)U_m(P,s+1,\chi_\alpha).\end{align*}
In particular, the region of convergence for $U_m(P,s,\chi_\alpha)$ extends meromorphically to $\text{Re}(s)>1$ with at most a finite number of poles in $(1,2)$.



\section{Bounds for generalized Selberg-Kloosterman sums}\label{sec:bounds}



In this section, we take $\Gamma$ to be a general cofinite but not cocompact discrete subgroup of $\PSL_2(\C)$, $\zeta = \infty$ a cusp of $\Gamma$, and $\chi_\alpha$ a homomorphism on $\Gamma$ as in Section \ref{sec:PS}. Let $D_\Gamma$ denote a fundamental domain for $\Gamma$ in $\mathbb{H}^3$ and $\mathcal{P}$ a fundamental polytope.

\subsection{Preliminary estimates}

We shall first prove an estimate for the Kloosterman zeta function in \eqref{zeta}.  We continue to assume $m,n$ are in the dual lattice of $\Lambda$.

{\thm\label{thm:GSthm1} The function $Z_{\infty}(m,n,s,\chi_\alpha) $ is meromorphic in $\text{Re}(s)>1$ with at most a finite number of simple poles in $(1,2)$ and satisfies the growth condition 
$$
|Z_{\infty}(m,n,s,\chi_\alpha)|=O\left(\frac{|n-\alpha|^2|m-\alpha|^2|s|}{(\sigma-1)^2}\right),
$$
for $s=\sigma+it$, $\sigma>1$ as $t\to \infty$. The implied constant depends only on $\Gamma, \chi_\alpha,m,n$.\\}

\noindent  For the proof of Theorem \ref{thm:GSthm1}, we  rely on two lemmas and Stirling's formula.  The first result we need is an upper bound for the $L^2$-norm of our Poincar{\'e} series. The proof and formulation of the following lemma is similar to that of \cite[Theorem 1]{GS}.

{\lem\label{lem1} Let $s=\sigma+it$ for $1<\sigma\leq 2$ and $|t|>1$,
$$
\int_{D_\Gamma} |U_m(P,s, \chi_\alpha)) |^2\, \frac{dx\,dy\,dr}{r^3}=O\left(\frac{|m-\alpha|^2}{(1-\sigma)^2} \right),
$$
where $D_\Gamma$ is a fundamental domain for $\Gamma\backslash \mathbb H^3$.}

\begin{proof}
For $2\leq \sigma\leq 3$, we have 
$
U_m(P,s,\chi_\alpha)=O(1)
$
uniformly in $\mathcal{P}$ since the series converges absolutely. From (\ref{eq:resID}) and  the property that $|\mathscr{R}_\lambda|\leq |\text{Im}(\lambda)|^{-1}$ by \cite[p.163]{EGM}, it follows that
$$
\left(\int_{D_\Gamma} |U_m(P,s, \chi_\alpha) |^2\, \frac{dx\,dy\,dr}{r^3}\right)^{1/2}=O(|m-\alpha|)\frac{|2s-1|}{2|t| |1-\sigma|},
$$
since $\text{Im}(s(2-s)) = 2t(1-\sigma)$.
\end{proof}


The next lemma computes the inner product of two Poincar\'e series.

{\lem\label{lem2} Let $\Gamma$ be a discrete subgroup of $\PSL_2(\mathcal{O}_K)$. For  $m,n\neq 0$, we have 
\begin{align*}
\int_{D_{\Gamma}}U_n(P,s, \chi_\alpha)\,& \overline{U_m(P,\bar s+2, \chi_\alpha)} \,  \frac{dx\,dy\,dr}{r^3}\\ & =\frac{\pi^{-3/2}4^{-s-1}\Gamma(2s)}{|n-\alpha|^{2}\Gamma(s)\Gamma(s+3/2)} Z_\infty(m,n,s,\chi_\alpha)+R(s),
\end{align*}
where $R(s)$ is analytic in $\text{Re}(s)>1$ and $|R(s)|= O\left(\frac{1}{\sigma-1} \right)$ in this region.}

 \begin{proof}
First note that from (\ref{eq:autU})  and unwinding 
 \begin{align*}
\int_{D_\Gamma}&U_m(P,s_1, \chi_\alpha)\,\overline{U_n (P,s_2, \chi_\alpha)} \, \frac{dx\,dy\,dr}{r^3}\\
  &=\int_{\Gamma\backslash\mathbb{H}^3} \sum_{M\in \Gamma_\infty'\backslash \Gamma }U_m(MP,s_1, \chi_\alpha)\chi_{ \alpha}(M)r_M^{\bar s_2} e(-i|n-\alpha|r+\langle n-\alpha,z\rangle) \, \frac{dx\,dy\,dr}{r^3}\\
&   = \int_0^\infty r^{\bar s_2}e^{2\pi |n-\alpha|r}\int_{\mathcal{P}} U_m(P,s_1, \chi_\alpha)e^{2\pi i\langle n-\alpha,z\rangle} \, dx\,dy\,\frac{dr}{r^3}.
\end{align*}
Examining the inner integral, 
\begin{align*}
\int_{\mathcal{P}}&U_m(P,s, \chi_\alpha)\, e^{2\pi i \langle n-\alpha, x+iy\rangle } \, dx\,dy\\
 & = \int_{\mathcal{P}}\sum_{M\in \Gamma '_\infty\backslash \Gamma }\overline{\chi_\alpha(M)}\,r_M^s e(i|m-\alpha|r_M-\langle m-\alpha,z_M\rangle )\, e^{2\pi i \langle n-\alpha, x+iy\rangle } \, dx\,dy,
 \end{align*}
we will break this sum into two terms, where $c=0$ and where $c\neq0$.  


For the contribution of the elements $M\in  \Gamma '_\infty\backslash \Gamma$ where $c=0$, we follow \cite{EGMpreprint}.
First notice that when $c=0$, 
 we have $|a|=|d|=1$. Thus $z_M= \varphi_a(z) +b\bar d$ where $\varphi_a:\mathbb{C} \to \mathbb{C}$ is an orthogonal linear map (as in \cite{EGMpreprint} p. 676). Denote $\varphi_a^* $ as the dual map of $\varphi_a$ with respect to $\langle \cdot,\cdot \rangle$. Then 
$$
\int_{\mathcal{P}} r^s e^{-2\pi |m-\alpha|r-2\pi i \langle  \varphi_a^*(m)-n\rangle, z} dz = r^se^{2\pi |m-\alpha|r}\mathrm{vol}(\mathcal{P})\delta_{\varphi_a^*(m), n},
$$
where $\delta$ is the Kronecker delta function. Thus 
\begin{align*}
 \sum_{\substack{M\in \Gamma _\infty'\backslash \Gamma\\c= 0} } &\overline{\chi_\alpha(M)}\int_0^\infty r^{\bar s_2} e^{2\pi |n-\alpha|r}\\ & \ \ \ \ \ \  \ \ \  \ \   \  \times\int_{\mathcal{P}}r_M^{s_1}\,e(i|m-\alpha|r_M-\langle m-\alpha,z_M\rangle) e^{2\pi i \langle n-\alpha, x+iy\rangle } \, dx\,dy \frac{dr}{r^3}\\ 
 & =  \sum_{\substack{M\in \Gamma _\infty'\backslash \Gamma\\c= 0} }\overline{\chi_\alpha\left(\begin{bmatrix} a& b \\ 0 & d \end{bmatrix}\right)}  \int_0^\infty r^{\bar s_2} e^{2\pi |n-\alpha|r}\\ &  \ \ \ \ \ \  \ \ \  \ \ \ \ \times\int_{\mathcal{P}}r^{s_1}  e^{-2\pi |m-\alpha|r -2\pi  i \langle \varphi_a^*(m-\alpha)-( n-\alpha) , x+iy\rangle } \, dx\,dy \frac{dr}{r^3}\\
  & = \text{vol}(\mathcal{P})C_{m,n}\int_0^\infty r^{s_1+\bar s_2-2}e^{-2\pi (|m-\alpha|-|n-\alpha|)r}\frac{dr}{r}\\ 
  & 
   =(2\pi )^{2-s_1-\bar s_2}(|m-\alpha|-|n-\alpha|)^{2-s_1-\bar s_2}\Gamma(s_1+\bar s_2-2)\text{vol}(\mathcal{P}) C_{m,n},
\end{align*}
where 
\[
C_{m,n} := \sum_{\substack{a,d\\ \varphi_a^*(m-\alpha)=n-\alpha}} \overline{\chi_\alpha\left(\begin{bmatrix} a& b \\ 0 & d \end{bmatrix}\right)}.
\] The $c=0$ case will be absorbed into $R(s)$.

We now turn to the contribution of the elements $M\in  \Gamma '_\infty\backslash \Gamma$ where $c\neq0$,
\begin{align*}
 \sum_{\substack{M\in \Gamma _\infty'\backslash \Gamma\\c\neq 0} }\overline{\chi_\alpha(M)} \int_0^\infty r^{\bar s_2} e^{2\pi |n-\alpha|r}\int_{\mathcal{P}}\,r_M^{s_1} e(i|m-\alpha|r_M&-\langle m-\alpha,z_M\rangle )\\ &\times  e^{2\pi i \langle n-\alpha, x+iy\rangle } \, dx\,dy \frac{dr}{r^3}.
 \end{align*}
 We first examine the inner integral and denote $H(r)$ by
 \[H(r):=
\sum_{\substack{M\in \Gamma _\infty'\backslash \Gamma\\c\neq 0} }\overline{\chi_\alpha(M)} \int_{\mathcal{P}}\,r_M^{s_1} e(i|m-\alpha|r_M-\langle m-\alpha,z_M\rangle )\, e^{2\pi i \langle n-\alpha, x+iy\rangle } \, dx\,dy.
 \]
 Since $\text{Im}(u\bar v) =- \text{Im}(\bar u v)$, we have
 \begin{align*}
H(r) =&\sum_{\substack{M\in \Gamma _\infty'\backslash \Gamma\\c\neq 0} }\overline{\chi_\alpha(M)}
 \int_{\mathcal{P}}\frac{r^{s_1}e^{(-2\pi |m-\alpha|r)/(|cz+ d|^2+|c|^2r^2)}}{(|cz+ d|^2+|c|^2r^2)^{s_1}}
 \\ &  \ \ \ \ \ \ \ \ \ \ \ \    \times \widetilde e\left(- \frac{\overline{m-\alpha}}{c}\left(\frac{\overline{cz+d}}{|cz+ d|^2+|c|^2r^2}\right)+\frac{a(\overline{m-\alpha})}{c} -(\overline{n-\alpha}) z\right)\, dx\,dy \\
\phantom{H(r) } & = \sum_{\substack{M\in \Gamma_\infty'\backslash \Gamma\\c\neq 0} }\overline{\chi_\alpha(M)}\,r^{s_1}\frac{\widetilde e\left(\frac{a(\overline{m-\alpha})}{c}\right)}{|c|^{2{s_1}}}
 \int_{\mathcal{P}}\frac{e^{(-2\pi |m-\alpha|r)/\left((|z+ d/c|^2+r^2)|c|^2\right)}}{(|z+ d/c|^2+r^2)^{s_1}}\\ 
 &  \ \ \ \ \ \ \ \ \ \ \ \ \ \ \ \ \ \ \ \ \ \ \ \ \ \ \ \ \ \ \ \ \ \   \times \widetilde e\left(- \frac{ \overline{m-\alpha}}{c^2}\left(\frac{\overline{z+d/c}}{|z+ d/c|^2+r^2}\right) -(\overline{n-\alpha}) z\right)\, dx\,dy\end{align*}
 \begin{align*}
 & = \sum_{\substack{(c,d)\in\mathcal{R}\\c\neq 0}}\overline{\chi_\alpha(M)}\,r^{s_1}\frac{\widetilde e\left(\frac{a( \overline{m-\alpha})}{c}\right)}{|c|^{2{s_1}}} \sum_{\ell \in\Lambda} \int_{\mathcal{P}}\frac{e^{(-2\pi |m-\alpha|r)/\left((|z+\ell+ d/c|^2+r^2)|c|^2\right)}}{(|z+\ell +d/c|^2+r^2)^{s_1}} \\
 &  \ \ \ \ \ \ \ \ \ \ \ \ \ \ \ \ \ \ \ \ \ \ \ \ \ \ \ \ \  \ \times \widetilde e\left( -\frac{\overline{ m-\alpha}}{c^2}\left(\frac{\overline{z+\ell +d/c}}{|z+\ell + d/c|^2+r^2}\right)- (\overline{n-\alpha}) z\right)\, dx\,dy\\
 &=\sum_{\substack{(c,d)\in\mathcal{R}\\c\neq 0}}\overline{\chi_\alpha(M)}\,r^{s_1}\frac{\widetilde e\left(\frac{a(\overline{m-\alpha})}{c}\right)}{|c|^{2{s_1}}} 
\int_{\mathbb{R}^2}\frac{e^{(-2\pi |m-\alpha|r)/\left((|z+ d/c|^2+r^2)|c|^2\right)}}{(|z+d/c|^2+r^2)^{s_1}}\\
 &  \ \ \ \ \ \ \ \ \ \ \ \ \ \ \ \ \ \ \ \ \ \ \ \ \ \ \ \ \ \ \ \ \ \    \times\widetilde e\left( -\frac{\overline{m-\alpha}}{c^2}\left(\frac{\overline{z+d/c}}{|z+ d/c|^2+r^2}\right) - (\overline{n-\alpha}) z\right)\, dx\,dy.
 \end{align*}
 Letting $z'= z+d/c$ and then $z=z'/r$ we have 

 \begin{align*}\label{eq:intexp2}
H(r) 
   & = \sum_{\substack{(c,d)\in\mathcal{R}\\c\neq 0}}\overline{\chi_\alpha(M)}\,r^{2-{s_1}}\frac{\widetilde e\left(\frac{a(\overline{m-\alpha})}{c}\right)}{|c|^{2{s_1}}}\int_{\mathbb{R}^2}\frac{e^{(-2\pi |m-\alpha|)/\left((|z|^2+1)r|c|^2\right)}}{(|z|^2+1)^{s_1}} \\ &  \ \ \ \ \ \ \ \ \ \ \ \ \ \ \ \ \ \ \ \ \ \ \ \ \ \ \ \ \ \ \ \ \ \ \times \widetilde e\left( -\left(\frac{(\overline{m-\alpha})\bar z}{c^2 r(|z|^2+1)}\right)-(\overline{n-\alpha}) (zr-d/c)\right)\, dx\,dy\nonumber\\
   & =\sum_{c\neq 0} S_\infty(m,n,c,\chi_\alpha)\frac{r^{2-{s_1}}}{|c|^{2{s_1}}} \int_{\mathbb{R}^2}\frac{e^{(-2\pi |m-\alpha|)/\left((|z|^2+1)r|c|^2\right)}}{(|z|^2+1)^{s_1}} \\&  \ \ \ \ \ \ \ \ \ \ \ \ \ \ \ \ \ \ \ \ \ \ \ \ \ \ \ \ \ \ \ \ \ \ \ \  \ \ \ \ \ \times  \widetilde e\left( -\left(\frac{(\overline{m-\alpha})\overline{z}}{c^2 r(|z|^2+1)}\right) -(\overline{n-\alpha}) zr\right)\, dx\,dy\nonumber,
\end{align*}
 where we recall that
$
 \displaystyle
S_\infty(m,n,c,\chi_\alpha)= \sum_{(c,d)\in\mathcal{R} }\overline{\chi_\alpha (\gamma)}\, \widetilde e\left(\frac{a(\overline{m-\alpha})+d(\overline{n-\alpha})}{c}\right).$

Finally we integrate
\begin{align*}\int_0^\infty  r^{\bar s_2} &e^{-2\pi |n-\alpha|r}H(r)\,  \frac{dr}{r^3} \\
 &= \sum_{c\neq 0}\frac{S_\infty(m,n,c,\chi_\alpha)}{|c|^{2{s_1}}}\int_0^\infty \int_{\mathbb{R}^2}r^{\bar s_2-s_1 + 2}e^{-2\pi |n-\alpha|r}\frac{e^{(-2\pi |m-\alpha|)/\left((|z|^2+1)r|c|^2\right)}}{(|z|^2+1)^{s_1}} \\&  \ \ \ \ \ \ \ \ \ \ \ \ \ \ \ \ \ \ \ \ \ \ \ \ \ \ \ \ \ \ \ \ \times  \widetilde e\left( -\left(\frac{(\overline{m-\alpha})\overline{z}}{c^2 r(|z|^2+1)}\right) -(\overline{n-\alpha}) zr\right)\, dx\,dy  \, \frac{dr}{r^3}.
\end{align*}
Letting $\bar s_2 = s_1+2$ and $s_1 =s$, then we have
 \begin{align*}\int_0^\infty  r^{\bar s_2} &e^{-2\pi |n-\alpha|r}H(r)\,  \frac{dr}{r^3} \\
&=\sum_{c\neq 0}\frac{S_{\infty}(m,n,c,\chi_\alpha)}{|c|^{2{s}}} \int_0^\infty \int_{\mathbb{R}^2}\frac{r^2 e^{-2\pi |n-\alpha| r} \widetilde e((\overline{n-\alpha})r z)}{(|z|^2+1)^s}\, dx\,dy  \, \frac{dr}{r}\\ 
&  \ \ \ \ \ \ \ \ \ \ \ \ \ \ \ \ \ \ \ \ \ \ \ \ \ \ \ \ \ \ \ \ \ \ \ \ \ \ \ \ \ \ \ \ \ \
  + \sum_{c\neq 0}\frac{S_{\infty}(m,n,c,\chi_\alpha)}{|c|^{2{s}}} R_{m,n}(s,c)
  \end{align*}
    \begin{align*} &=\mathcal{I}  \cdot \sum_{c\neq 0}\frac{S_{\infty}(m,n,c,\chi_\alpha)}{|c|^{2{s}}}
  + \sum_{c\neq 0}\frac{S_{\infty}(m,n,c,\chi_\alpha)}{|c|^{2{s}}} R_{m,n}(s,c),
\end{align*}
where $$\mathcal{I}:= \int_0^\infty \int_{\mathbb{R}^2}\frac{r^2 e^{-2\pi |n-\alpha| r} \widetilde e(-(\overline{n-\alpha})r z)}{(|z|^2+1)^s}\, dx\,dy  \, \frac{dr}{r}
$$
 and 
 \begin{align*}
 R_{m,n}(s,c) := \int_0^\infty \int_{\mathbb{R}^2}&\frac{r e^{-2\pi |n-\alpha| r} \widetilde e(-(\overline{n-\alpha})r z)}{(|z|^2+1)^s}\\
&\times \Big[\exp\left(\frac{-2\pi |m-\alpha|}{(|z|^2+1)r|c|^2}\right) \widetilde e\left( \frac{-(\overline{m-\alpha})\overline{z}}{c^2 r(|z|^2+1)} \right)-1\Big]\, dx\,dy  \, dr. \end{align*}

We evaluate $\mathcal{I}$, following  Proposition 3.10 of \cite{Sarnak}.
Examining the inner integral, for $n=n_1+in_2$ and $\alpha=a_1+ia_2$, we have
\begin{align*}
\int_{\mathbb{R}^2}\frac{ \widetilde e(-(\overline{n-\alpha})r z)}{(|z|^2+1)^s}\, dx\,dy&
= \int_{\mathbb{R}^2}\frac{ e^{-2\pi i r \left((n_1-a_1)y-(n_2-a_2)x\right)}}{(x^2+y^2+1)^s}\, dx\,dy\\
& = \int_0^\infty \int_0^{2\pi}\frac{-2\pi i r \rho\sin(\theta - \beta)|n-\alpha|}{(\rho^2+1)^s}\rho\,d\theta\,d\rho
\end{align*}
where $\beta$ is chosen so that $|n-\alpha|\cos\beta = n_1-a_1$ and $|n-\alpha|\sin\beta = n_2-a_2$. By a change of variables $\theta\mapsto\theta+\beta$, 
\begin{align*}
\int_{\mathbb{R}^2}\frac{ \widetilde e(-(\overline{n-\alpha})r z)}{(|z|^2+1)^s}\, dx\,dy&
 = \int_0^\infty\frac{\rho}{(\rho^2+1)^s} \int_0^{2\pi}e^{-2\pi i r \sin\theta |n-\alpha|}\,d\theta\,d\rho\\
 & = \int_0^\infty\frac{J_0(2\pi r |n-\alpha|\rho)\rho}{(\rho^2+1)^s}\,d\rho\\ & 
  = \frac{K_{1-s}(2\pi |n-\alpha|r)(2\pi |n-\alpha|r)^{s-1}}{\Gamma(s)2^{s-1}}.
\end{align*}
Thus 
\begin{align*}
\mathcal{I} &=\frac{1}{\Gamma(s)2^{s-1}} \int_0^\infty r^2 e^{-2\pi |n| r}K_{1-s}(2\pi |n-\alpha|r)(2\pi |n-\alpha|r)^{s-1} \frac{dr}{r}\\&
 =\frac{1}{\pi^{3/2} 4^{s+1} |n-\alpha|^2} \frac{\Gamma(2s)}{\Gamma(s)\Gamma(s+3/2)}
\end{align*}
as in \cite[p.272]{Sarnak}.

As in \cite{Sarnak} Proposition 3.10, $R_{m,n}(s,c)\ll \frac{|c|^{-2}}{\sigma - 1}$ for $\text{Re}(s)=\sigma$ and so $$\sum_{c\neq 0}\frac{S_{\infty}(m,n,c,\chi_\alpha)}{|c|^{2{s}}} R_{m,n}(s,c)$$ is holomorphic in $\text{Re}(s)>1$. 
\end{proof}

%
%

Finally, Stirling's formula implies that for $\text{Im}(s)=t$,
$$\left|\frac{\pi^{-3/2}4^{-s-1}|n-\alpha|^{-2}\Gamma(2s)}{\Gamma(s)\Gamma(s+3/2)} \right|\sim \frac{1}{4\pi ^2|n-\alpha|^2(2t+1)}$$
 as $|t|\to \infty$. From this and Lemmas \ref{lem1} and \ref{lem2}, we have Theorem \ref{thm:GSthm1}.

\begin{rek}[Congruence subgroups]

Following \cite[\S7.6]{EGM}, let $\fa$ be a nonzero ideal in $\mathcal O_K$, and let 
$
\Gamma(\fa) = \left\{A \in\PSL_2(\mathcal O_K) : A \equiv I \bmod \fa \right\}.
$ 
The lattice of translations corresponding to the unipotent part $\Gamma(\fa)'_\infty$ of the stabilizer of $\infty$ is $\fa\subset \mathbb C$. Consider the Poincar\'e series
\[
{U}_m(P,s, \chi,\Gamma,\fa)=\sum_{A\in \Gamma(\fa)'_\infty\backslash \Gamma(\fa)}  \overline{\chi(A)}r_A^s e(i|m|r_A+\langle m,z_A\rangle),
\]
where $\chi$ is an additive homomorphism on $\Gamma(\fa)$.  By \cite[Lemma 7.6.6]{EGM} its analytic behaviour is parallel to  that of $U_m(P,s, \chi_\alpha) $ above, and its inner product is computed explicitly in \cite[Proposition 7.6.11]{EGM} in the case $\chi = 1$.
\end{rek}

\subsection{Proof of Theorem \ref{thm:GSthm2}} 
 
We can now prove the desired bound on our Selberg-Kloosterman sums. Let $\epsilon >0$.  By the definition of $Z_\infty$ and the trivial bound on the Kloosterman sum, on $\text{Re}(s) =2+\epsilon$, we have $|Z_{\infty}(m,n,s,\chi_\alpha)| = O(1)$ while on $\text{Re}(s) =1+\epsilon$, $|Z_{\infty}(m,n,s,\chi_\alpha)| = O(|t|)$ where the implied constant depends on $\epsilon$. 
Recall that
 \begin{align*}
 Z_{\infty}(m,n,s,\chi_\alpha)&= \sum_{c\neq 0}\frac{S_{\infty}(m,n,c, \chi_\alpha)}{|c|^{2s}}.
 \end{align*}
By the Phragm\'{e}n-Lindel\"{o}f principle, there is an unique affine-linear function $g(\sigma)$ with $g(1+\epsilon) =1+\epsilon$ and $g(2+\epsilon)= 0$ so that 
\begin{equation}\label{eq:PL}
| Z_{\infty}(m,n,\sigma+it,\chi_\alpha)| = O\left(|t|^{g(\sigma)}\right),
\end{equation}
for all $1<\sigma\leq 2$ as $|t|\to \infty$.

Perron's formula gives
 \begin{align*}
\sum_{1\leq |c|\leq x}\frac{S_{\infty}(m,n,c,\chi_\alpha)}{|c|^2}
&= \frac{1}{2\pi i }\int_{(\sigma_\epsilon)} Z_{\infty}\left(m,n,1+\frac{s}{2},\chi_\alpha\right) \frac{x^{s}}{s}\,ds,
 \end{align*}
 where $\sigma_\epsilon :=2+\epsilon$,
and it follows that 
 \begin{align*}
\sum_{1\leq |c|\leq x}\frac{S_{\infty}(m,n,c,\chi_\alpha)}{|c|^2}
& =\frac{1}{2\pi i }\int_{\sigma_\epsilon-iT}^{\sigma_\epsilon+iT} Z_{\infty}\left(m,n,1+\frac{s}{2},\chi_\alpha\right) \frac{x^{s}}{s}\,ds + O\left(\frac{x^{\sigma_{\epsilon}}}{T}\right).
 \end{align*}
 Recall that $Z_{\infty}\left(m,n,1+\frac{s}{2},\chi_\alpha\right) $ has a finite number of simple poles at \\$s=2(s_j-1):=\beta_j$ with  $s_j\in(1,2)$  and define  $$\tau_j(m,n):=\frac{\text{Res}_{s=\beta_j}Z_{\infty}\left(m,n,1+\frac{s}{2},\chi_\alpha\right)}{{\beta_j} }. $$
We will now apply the Residue Theorem to the rectangular path of integration along the box $[1+iT, 1-iT]$, $[1-iT,\sigma_\epsilon-iT],\dots$ and from (\ref{eq:PL}) we have
 \begin{align*}
\sum_{1\leq |c|\leq x}\frac{S_{\infty}(m,n,c,\chi_\alpha)}{|c|^2}
& =\sum_{j} \tau_j(m,n)x^{\beta_j} + O\left(x^{\epsilon}T^{1\pm \epsilon}+\frac{x^{2+\epsilon}}{T}\right),
 \end{align*}
 as $x\to\infty$. Now choose $T=x$ and the result follows.

\section{Proof of equidistribution}

\subsection{Coset counting}

Define the counting function
\begin{equation}\label{eq:N}
N(X) :=\left| \left\{ \begin{pmatrix}*&*\\c&d\end{pmatrix} \in \mathcal R: |c| < X\right\}\right|,
\end{equation}
where we recall $\mathcal R$ is a complete set of double-coset representatives in \eqref{Rset}.
We derive an asymptotic for $N(X)$ using the Dirichlet series $\phi_0$ in the constant term of the Eisenstein series $E_A(P,s)$.

\begin{prop}
\label{counting}
For $N(X)$ as in \eqref{eq:N}, we have
\[
N(X) = \frac{|\Lambda|}{\mathrm{vol}(\Gamma)}X^4 + O\left(X^{3 + \epsilon}\right).
\]
\end{prop}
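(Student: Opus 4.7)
The plan is to extract the asymptotics of $N(X)$ from the Dirichlet series
\[
\phi_0(s) = \sum_{(c,d)\in\mathcal R}\frac{1}{|c|^{2s}},
\]
which appears as the coefficient of $r^{2-s}$ in the constant term \eqref{a0} of the Eisenstein series $E_A(P,s)$. Since $|c|^2$ takes values in $\mathbb Z_{\geq 1}$ for $c\in\Lambda$, the series $\phi_0(s)$ is an ordinary Dirichlet series in the variable $s$ with coefficients counting elements of $\mathcal R$ of bounded norm, so $N(X)$ is exactly a partial sum of these coefficients up to $|c|^2<X^2$.

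First I would pin down the analytic behaviour of $\phi_0(s)$. From \eqref{a0} and the fact that the higher Fourier coefficients $a_{\omega'}(r,s)$ are entire in $s$ for fixed $r>0$ (the $K$-Bessel functions are entire in their order), the pole of $E_A(P,s)$ at $s=2$ comes only from the second summand of $a_0(r,s)$. Equating its residue with the known residue $|\Lambda|/\mathrm{vol}(\Gamma)$ of $E_A(P,s)$ at $s=2$ identifies $\phi_0(s)$ as meromorphic with a simple pole at $s=2$ whose residue is a specific constant multiple of $|\Lambda|/\mathrm{vol}(\Gamma)$. The meromorphic continuation of $\phi_0(s)$ to $\text{Re}(s)>0$ with finitely many simple poles on $(0,2]$ is inherited from the corresponding statement for $E_A(P,s)$ given in \cite[Chapter 6]{EGM}, and a functional equation relating $\phi_0(s)$ to $\phi_0(2-s)$ also follows from the functional equation of the Eisenstein series.

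Next I would apply Perron's formula: for $\sigma_\epsilon=2+\epsilon$,
\[
N(X) = \frac{1}{2\pi i}\int_{\sigma_\epsilon-iT}^{\sigma_\epsilon+iT}\phi_0(s)\frac{X^{2s}}{s}\,ds + O\!\left(\frac{X^{4+\epsilon}}{T}\right),
\]
and shift the contour to $\text{Re}(s)=1+\epsilon/2$. The Residue Theorem picks up the pole at $s=2$, contributing the main term $\frac{|\Lambda|}{\mathrm{vol}(\Gamma)}X^4$, together with contributions from any intermediate poles in $(1,2)$; since each such pole has real part strictly less than $2$, these residues contribute terms of the form $c_j X^{2\beta_j}$ with $2\beta_j<4$ and thus are absorbed into $O(X^{3+\epsilon})$ after choosing the exceptional poles to be $\le 3/2$ or estimated separately. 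The horizontal and shifted vertical integrals are bounded using polynomial growth estimates on $\phi_0(s)$ in vertical strips: on $\text{Re}(s)=2+\epsilon$ one has $\phi_0(s)=O(1)$ trivially, while on $\text{Re}(s)=1+\epsilon/2$ one applies the functional equation together with convexity/Phragm\'en--Lindel\"of, exactly analogous to the bound \eqref{eq:PL} used in the proof of Theorem \ref{thm:GSthm2}. Optimizing the choice of $T$ in terms of $X$ (typically $T=X$) then yields the error $O(X^{3+\epsilon})$.

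The main obstacle is step three: obtaining a polynomial bound for $\phi_0(s)$ on the shifted vertical line. Unlike the trivial bound on $\text{Re}(s)=2+\epsilon$, the behaviour to the left of the line of absolute convergence requires the functional equation together with estimates on the scattering matrix entries; for $\Gamma=\PSL_2(\mc O_K)$ this is controlled by Hecke $L$-functions as noted in the remark after \eqref{a0}, and for general cofinite $\Gamma$ one falls back on the analytic properties in \cite[Chapters 6--8]{EGM}. Once such a bound of the shape $\phi_0(\sigma+it)\ll (1+|t|)^{A}$ is in hand on $\text{Re}(s)=1+\epsilon/2$, the error bookkeeping is standard and produces the claimed $O(X^{3+\epsilon})$.
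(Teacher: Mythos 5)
Your proposal follows essentially the same route as the paper's proof: read off $\phi_0(s)$ from the constant term of the Eisenstein series, apply Perron's formula with kernel $X^{2s}/s$, shift the contour past the simple pole at $s=2$ (whose residue is governed by the residue $|\Lambda|/\mathrm{vol}(\Gamma)$ of $E_A(P,s)$), bound $\phi_0$ on the shifted line by Phragm\'en--Lindel\"of, and choose $T=X$. The only minor difference is that you justify the polynomial growth on the left line via the functional equation and convexity, whereas the paper simply reuses the Stirling/Phragm\'en--Lindel\"of mechanism of Theorem \ref{thm:GSthm2}; both are standard, and your explicit caveat about possible exceptional poles with real part in $(3/2,2)$ is a point the paper's own proof silently elides.
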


\begin{proof}
The proof follows the same method as that of Theorem \ref{thm:GSthm2} (see also \cite[Theorem 5]{B}), so we will be brief. In place of the series $Z_{\infty}(m,n,s,\chi_\alpha)$, we consider instead the Dirichlet series $\phi_0(s)$ arising from the constant term of the Eisenstein series $E_A(P,s)$, whose analytic properties we have recalled in Section \ref{eis}. 

We first write
\[
\phi_{0}(s) = \sum_{(c,d)\in\mathcal R}\frac{1}{|c|^{2s}} = \sum_{c\neq 0}\frac{1}{|c|^{2s}}\left|\left\{d: \begin{pmatrix}*&*\\c&d\end{pmatrix}\in \mathcal R\right\}\right|.
\]
(According to \cite{MNW}, the cardinality in the summand is at most that of $\{d: d \mod c\Lambda\}$, but we do not need this explicitly.)
By Perron's formula, we have again for $\sigma_\epsilon = 2 +\epsilon$,
\[
N(X) = \lim_{T\to\infty}\frac{1}{2\pi i}\int_{\sigma_\epsilon -iT}^{\sigma_\epsilon + iT}\phi_0(s)\frac{X^{2s}}{s}ds,
\]
and the same application of Stirling's formula and the Phragm\'en-Lindel\"of principle gives
\[
\frac{1}{2\pi i}\int_{\sigma_\epsilon -iT}^{\sigma_\epsilon + iT}\phi_0(s)\frac{X^{2s}}{s}ds + O\left(\frac{X^{4 + \epsilon}}{T}\right).
\]
Then shifting the line of integration to the line Re$(s) = 1$ and using the residue theorem, we have
\[
N(X) = \frac{|\Lambda|}{\text{vol}(\Gamma)}X^4 + O\left(X^2T +  \frac{X^{4 + \epsilon}}{T}\right)
\]
and taking $T=X$ yields the claim. 
\end{proof}

\subsection{Proof of Theorem 1}\label{proofothm1}

For the remainder of this paper we fix $\Gamma = \PSL_2(\mathcal O_K)$ and $\zeta= \infty$. Proposition \ref{counting} specializes to 
\begin{equation}
\label{NX}
N(X) = \frac{4\pi^2}{|d_K|^{3/2}\zeta_K(2)} X^4 + O\left(X^{3 + \epsilon}\right).
\end{equation}
The proof of the main theorem now follows, as explained in the introduction.
That is, returning to the setting of Theorem 1 and the discussion after it, the set under consideration can be rewritten as
\[
\{r\widetilde{D}(c,d)\,:\, (c,d)\in \mathcal{R}\},
\]
so the sum becomes
\[
\sum_{0 < |c| < X }\sum_{\substack{(c,d)\in \mathcal R}}e(rn\widetilde{D}(c,d)) = \sum_{(c,d)\in N(X)}e(rn\widetilde{D}(c,d)).
\]
The length of the sum is precisely $N(X)$, so combining Theorem 2 and \eqref{NX}, it follows by \eqref{def:S1} that the latter equals
\[ 
\sum_{|c|<X} S_\infty(\lfloor -r\rfloor,\lfloor-r\rfloor,c,{\chi_\alpha})  = o(X^4),
\]
as $X\to\infty$, and by the Weyl criterion we conclude the main result.

\subsection*{Acknowledgments} This work was begun through the Rethinking Number Theory 2 Workshop in 2021. K-L acknowledges support from NSF grant number DMS-2001909. W. was partially supported by DMS-2212924. We would also like to thank the referees for their thorough and thoughtful comments.

Data sharing not applicable to this article as no datasets were generated or analysed during the current study.

The authors have no relevant financial or non-financial interests to disclose.
 

\end{document}